\newtheorem{theo}{Theorem}[section]
\newtheorem{nota}[theo]{Notation}
\newtheorem{conv}[theo]{Convention}
\newtheorem{propo}[theo]{Proposition}
\newtheorem{defi}[theo]{Definition}
\newtheorem{coro}[theo]{Corollary}
\newtheorem{rem}[theo]{Remark}
\newtheorem{exams}[theo]{Examples}
\newcommand\Inj{\operatorname{Inj}}
\newcommand\Alg{\operatorname{Alg}}
\newcommand\Coalg{\operatorname{Coalg}}
\newcommand\id{\operatorname{id}}
\newcommand\Pos{\operatorname{\bf Pos}}
\newcommand\Bool{\operatorname{\bf Bool}}
\newcommand\Comb{\operatorname{Comb}}
\newcommand\cof{\operatorname{cof}}
\newcommand\cell{\operatorname{cell}}
\newcommand\cc{\mathcal {C}}
\newcommand\cf{\mathcal {F}}
\newcommand\ci{\mathcal {I}}
\newcommand\cj{\mathcal {J}}
\newcommand\ck{\mathcal {K}}
\newcommand\cl{\mathcal {L}}
\newcommand\cm{\mathcal {M}}
\newcommand\crr{\mathcal {R}}
\newcommand\cs{\mathcal {S}}
\newcommand\cp{\mathcal {P}}
\newcommand\cw{\mathcal {W}}
\newcommand\cx{\mathcal {X}}
 \newbox\noforkbox \newdimen\forklinewidth
\noforkbox\hbox{\lower 2pt\box1\lower
2pt\box0\relax}
\date{May 27, 2020}
\begin{document}
\title[Remarks on combinatorial and accessible model categories]
{Remarks on combinatorial and accessible model categories}
\author[J. Rosick\'{y}]
{J. Rosick\'{y}}
\thanks{Supported by the Grant Agency of the Czech Republic under the grant 
              19-00902S.} 
\address{
\newline J. Rosick\'{y}\newline
Department of Mathematics and Statistics\newline
Masaryk University, Faculty of Sciences\newline
Kotl\'{a}\v{r}sk\'{a} 2, 611 37 Brno, Czech Republic\newline
rosicky@math.muni.cz
}
 
\begin{abstract}
Using full images of accessible functors, we prove some results about combinatorial and accessible model categories. In particular,
we give an example of a weak factorization system on a locally presentable category which is not accessible. 
\end{abstract} 

\maketitle

\section{Introduction}
Twenty years ago, M. Hovey asked for examples of model categories which are not cofibrantly generated. This is the same as asking
for examples of weak factorization systems which are not cofibrantly generated. One of the first examples was given in \cite{AHRT}:
it is a weak factorization system $(\cl,\crr)$ on the locally presentable category of posets where $\cl$ consists of embeddings.
The reason is that posets injective to embeddings are precisely complete lattices which do not form an accessible category. Hence
$\cl$ is not cofibrantly generated. Since then, an importance of accessible model categories and accessible weak factorization systems has emerged, And, the same question appears again, i.e., to give an example of a weak factorization system $(\cl,\crr)$ on a locally presentable category which is not accessible. Now, $\cl$-injective objects do not necessarily form an accessible category but only a full image
of an accessible functor. Such full images are accessible only under quite restrictive assumptions (see \cite{BTR}). But, for an accessible weak factorization system, $\cl$-injective objects form the full image of a forgetful functor from algebraically $\cl$-injective objects. Such full images are closed under reduced products modulo $\kappa$-complete filters for some regular cardinal
$\kappa$. We use this property to present a non-accessible factorization system on the category of Boolean algebras having, again,
$\cl$ consisting of embeddings. Full images of accessible functors are also used for showing that accessible weak factorization systems
on a locally presentable category are closed under small intersections. Another proof of this fact is given in \cite{H1}. 

Given a cofibrantly weak factorization system $(\cl,\crr)$ on a locally presentable category $\ck$, \cite{RT2} constructs a class 
$\cw_{\cl}$ and shows that, assuming Vop\v enka's principle, $\cw_{\cl}$ is the smallest class of weak equivalences making $\ck$
a model category with $\cl$ as the class of cofibrations. There is still open whether Vop\v enka's principle is needed for this.  Recently, S. Henry \cite{H} has radically generalized results of C.-D. Cisinski \cite{C} and M. Olschok \cite{O} and has given mild assumptions under which Vop\v enka's principle is not needed. Using full images of accessible functors we show that $(\ck,\cl,\cw_\cl)$  is a model category iff its transfinite construction from \cite{RT2} converges, i.e., it stops at some ordinal.

Finally, we show that weak equivalences in an accessible model category form a full image of an accessible functor, which corrects
an error in \cite{R3}.

{\bf Acknowledgement.} We are grateful to J. Bourke for valuable discussions about this paper.

\section{Full images} 
Let $F:\cm\to\ck$ be an accessible functor. Recall that this means that both $\cm$ and $\ck$ are accessible and $F$
preserves $\lambda$-directed colimits for some regular cardinal $\lambda$. The full subcategory of $\ck$ consisting 
of objects $FM$, $M\in\cm$ is called a \textit{full image} of $F$. While accessible categories are, up to equivalence,
precisely categories of models of basic theories, full images of accessible functors are, up to equivalence, precisely
categories of structures which can be axiomatized using additional operation and relation symbols (see \cite{R1});
they are also called pseudoaxiomatizable. In both cases, we use infinitary first-order theories.

Let $\cm$ be a full subcategory of a category $\ck$ and $K$ an object in $\ck$. We say that $\cm$ satisfies 
the \textit{solution-set condition} at $K$ if there exists a set of morphisms $(K\to M_i)_{i\in I}$ with $M_i$ 
in $\cm$ for each $i\in I$ such that every morphism $f:K\to M$ with $M$ in $\cm$ factorizes through some $f_i$, 
i.e., $f=gf_i$. $\cm$ is called \textit{cone-reflective} in $\ck$ if it satisfies the solution-set condition
at each object $K$ in $\ck$ (see \cite{AR}). Given a set $\cx$ of objects of $\ck$, we say that $\cm$ 
satisfies the solution set condition at $\cx$ if it satisfies this condition at each $X\in\cx$.  

\begin{propo}[\cite{R} 2.4]\label{cone}
The full image of an accessible functor is cone-reflective in $\ck$.
\end{propo}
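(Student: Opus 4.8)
The plan is to use two standard facts about accessible categories and functors: that a functor preserving $\lambda$-directed colimits also preserves $\mu$-directed colimits for every regular $\mu\ge\lambda$ (because a $\mu$-directed poset is in particular $\lambda$-directed), and that every object of an accessible category is $\mu$-presentable once $\mu$ is large enough, while an accessible category has, up to isomorphism, only a set of $\mu$-presentable objects for suitably large $\mu$. No uniformisation-type theorem for $F$ itself is needed.

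Write $F\colon\cm\to\ck$ for the given functor, preserving $\lambda$-directed colimits, and fix an object $K$ of $\ck$. First I would pick a regular cardinal $\mu\ge\lambda$ large enough that $\cm$ is $\mu$-accessible and that $K$ is a $\mu$-presentable object of $\ck$; both conditions hold for a cofinal class of regular cardinals, so such a $\mu$ exists. Let $\ca$ be a set of representatives of the isomorphism classes of $\mu$-presentable objects of $\cm$, and let the candidate solution set at $K$ consist of all morphisms $f_i\colon K\to FA$ with $A\in\ca$; this is a genuine set, since $\ca$ is essentially small and $\ck$ is locally small (being accessible).

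It then remains to verify the solution-set condition. Given a morphism $f\colon K\to FM$ with $M\in\cm$, express $M=\colim_{d} M_d$ as a $\mu$-directed colimit of $\mu$-presentable objects $M_d$ of $\cm$ with colimit maps $m_d\colon M_d\to M$. Applying $F$ and using that it preserves $\mu$-directed colimits gives $FM=\colim_d FM_d$ with maps $Fm_d$. Since $K$ is $\mu$-presentable, $f$ factors as $f=Fm_{d_0}\circ g$ for some $d_0$ and some $g\colon K\to FM_{d_0}$. Choosing an isomorphism $h\colon M_{d_0}\to A$ with $A\in\ca$ and setting $f_i:=Fh\circ g\colon K\to FA$, which is one of the chosen morphisms, we get that $F(m_{d_0}\circ h^{-1})\colon FA\to FM$ is a morphism of $\ck$, hence of the full image of $F$ since the latter is a full subcategory of $\ck$, and $F(m_{d_0}\circ h^{-1})\circ f_i=Fm_{d_0}\circ g=f$. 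Thus $f$ factors through $f_i$ inside the full image, and since $K$ was arbitrary the full image is cone-reflective in $\ck$.

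I do not expect a real obstacle; the only step that needs attention is the cardinal bookkeeping in the choice of $\mu$ — producing a single $\mu$ that simultaneously makes $\cm$ $\mu$-accessible and $K$ $\mu$-presentable — which is precisely where the accessibility of both $\cm$ and $\ck$ is used.
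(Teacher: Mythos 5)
Your argument is correct and is essentially the standard proof of the cited result ([R]~2.4): raise the index to a regular $\mu\ge\lambda$ for which $\cm$ is $\mu$-accessible and $K$ is $\mu$-presentable, and take as solution set all morphisms from $K$ into $F$ of the (essentially small) collection of $\mu$-presentable objects of $\cm$. The paper itself gives no proof here, but your cardinal bookkeeping and the factorization through $FM_{d_0}$ are exactly what the reference does, and the appeal to fullness of the image to see that the comparison morphism lies in the subcategory is handled correctly.
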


\begin{propo}\label{union}
Let $\ck$ be a locally presentable category, $I$ a set and $\cx_i\subseteq\ck$, $i\in I$, full images of accessible functors.
Then $\cup_{i\in I}\cx_i$ is a full image of an accessible functor.
\end{propo}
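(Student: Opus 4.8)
The plan is to realize $\bigcup_{i\in I}\cx_i$ as the full image of the canonical functor out of the coproduct of the $\cm_i$'s. For each $i\in I$ fix an accessible functor $F_i\colon\cm_i\to\ck$ whose full image is $\cx_i$; say $\cm_i$ is $\lambda_i$-accessible and $F_i$ preserves $\lambda_i$-directed colimits. Since $I$ is a set, one can choose a single regular cardinal $\lambda$ such that $\lambda\ge\lambda_i$ and $\cm_i$ is $\lambda$-accessible for every $i\in I$; this uses only the standard fact that an accessible category is $\mu$-accessible for every sufficiently large $\mu$ (sharply above its rank), together with the cofinality of such $\mu$ (see \cite{AR}). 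Form $\cm=\coprod_{i\in I}\cm_i$ and let $F\colon\cm\to\ck$ be the functor restricting to $F_i$ on the $i$-th summand.

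Next I would verify that $F$ is an accessible functor. The category $\cm$ is $\lambda$-accessible: a $\lambda$-directed diagram in a coproduct is connected, hence lands in a single summand $\cm_i$, so $\lambda$-directed colimits in $\cm$ exist and are computed componentwise; an object of $\cm$ lying in $\cm_i$ is $\lambda$-presentable in $\cm$ precisely when it is $\lambda$-presentable in $\cm_i$, so the $\lambda$-presentable objects of $\cm$ form a set, and every object of $\cm$ is a $\lambda$-directed colimit of them (since each $\cm_i$ is $\lambda$-accessible). By the same connectedness observation, $F$ preserves $\lambda$-directed colimits, because each $F_i$ does and $\lambda\ge\lambda_i$. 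Hence $\cm$ is accessible and $F$ is an accessible functor.

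Finally, the full image of $F$ is, directly from the definition,
$\{FM : M\in\cm\}=\bigcup_{i\in I}\{F_iM_i : M_i\in\cm_i\}=\bigcup_{i\in I}\cx_i$,
as desired. (In fact local presentability of $\ck$ is not needed here beyond guaranteeing that $\ck$ is accessible, which is what the notion of a full image of an accessible functor requires; the degenerate case $I=\emptyset$ is covered as well, the coproduct then being the empty category.)

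The only genuinely delicate point is the cardinal bookkeeping in the first step: arranging a single $\lambda$ for which every $\cm_i$ is $\lambda$-accessible and every $F_i$ preserves $\lambda$-directed colimits simultaneously. This is routine given that $I$ is a set, relying on the usual lemmas about raising the index of accessibility of a category and of a functor; everything else in the argument is formal.
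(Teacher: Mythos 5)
Your proof is correct and follows essentially the same route as the paper: realize $\bigcup_{i\in I}\cx_i$ as the full image of the induced functor $\coprod_{i\in I}\cm_i\to\ck$. The paper states this in one line, whereas you supply the (correct) cardinal bookkeeping needed to see that the coproduct is accessible and the induced functor preserves $\lambda$-directed colimits for a single uniform $\lambda$.
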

\begin{proof}
Let $\cx_i$ be full images of accessible functors $F_i:\cm_i\to\ck$, $i\in I$. Then $\cup_{i\in I} \cx_i$ is a full image of an accessible functor $F:\coprod_{i\in I} \cx_i\to\ck$ induced by $F_i$. 
\end{proof}

\begin{nota}
{
\em
Let $\cx$ be a class of morphisms in $\ck$. Then $\overline{\cx}$ will denote its 2-out-of-3 closure, i.e., the smallest class of morphisms
such that
\begin{enumerate}
\item $f,g\in\overline{\cx}$ implies $gf\in\overline{\cx}$,
\item $gf,f\in\overline{\cx}$ implies $g\in\overline{\cx}$ and
\item $gf,g\in\overline{\cx}$ implies $f\in\overline{\cx}$.
\end{enumerate}
We will consider these classes as full subcategories in $\ck^{\to}$.
}
\end{nota}

\begin{propo}\label{closure}
Let $\ck$ be a locally presentable category and $\cx\subseteq\ck^{\to}$ a full image of an accessible functor. Then $\overline{\cx}$ is a full image of an accessible funstor.
\end{propo}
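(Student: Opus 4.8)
The plan is to exhibit $\overline{\cx}$ as a countable union of full images of accessible functors and then to invoke Proposition \ref{union}. Set $\cx_0=\cx$ and, given $\cx_n\subseteq\ck^{\to}$, let $\cx_{n+1}$ be the union of $\cx_n$ with the three classes
\[
\{\,gf:f,g\in\cx_n\,\},\qquad\{\,g:\ gf\in\cx_n\ \text{and}\ f\in\cx_n\ \text{for some}\ f\,\},\qquad\{\,f:\ gf\in\cx_n\ \text{and}\ g\in\cx_n\ \text{for some}\ g\,\}
\]
of composites and of left/right ``cancellands'' corresponding to the three rules of the Notation. Since those rules are finitary, a straightforward induction gives $\overline{\cx}=\bigcup_{n<\omega}\cx_n$: the right-hand side contains $\cx$, is closed under rules (1)--(3), and is contained in every $2$-out-of-$3$-closed class containing $\cx$. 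Hence, by Proposition \ref{union} (applied with a four-element index set at the successor step, and with $I=\omega$ to form the final union), it is enough to prove that whenever $\cx_n$ is a full image of an accessible functor, so is each of the three displayed classes. We take $\cx$, and hence all the classes below, to be replete; this changes nothing, because the repletion of the full image of an accessible functor $F$ is the full image of the projection to $\ck^{\to}$ from the pseudopullback of $F$ along $\id_{\ck^{\to}}$, which is again accessible.

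For the three classes, recall that $\ck^{\to}$ is locally presentable, as is the category $\ck^{\mathbf 3}$ of composable pairs $X_0\to X_1\to X_2$ in $\ck$, and that the three ``face'' functors $d_2,d_1,d_0\colon\ck^{\mathbf 3}\to\ck^{\to}$, sending such a pair respectively to $X_0\to X_1$, to $X_0\to X_2$, and to $X_1\to X_2$, are computed by precomposition with functors between finite posets and so preserve all limits and colimits; in particular each $d_i$ is accessible. Let $F\colon\cm\to\ck^{\to}$ be an accessible functor whose full image is $\cx_n$. Form the pseudopullback $\cp$ of $F\times F\colon\cm\times\cm\to\ck^{\to}\times\ck^{\to}$ along $(d_2,d_0)\colon\ck^{\mathbf 3}\to\ck^{\to}\times\ck^{\to}$. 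Since the $2$-category of accessible categories is closed under pseudopullbacks along accessible functors (see \cite{AR}), $\cp$ is accessible and its projection $\cp\to\ck^{\mathbf 3}$ is accessible; hence the composite of this projection with $d_1$ is an accessible functor $\cp\to\ck^{\to}$, and — using repleteness of $\cx_n$ — its full image is exactly $\{\,gf:f,g\in\cx_n\,\}$. Replacing $(d_2,d_0)$ by $(d_2,d_1)$ and post-composing the projection with $d_0$ produces a full image equal to $\{\,g:\ gf\in\cx_n\ \text{and}\ f\in\cx_n\,\}$; replacing it by $(d_0,d_1)$ and post-composing with $d_2$ produces $\{\,f:\ gf\in\cx_n\ \text{and}\ g\in\cx_n\,\}$. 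This disposes of the three cases and, with them, of the proposition.

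The one genuine input is the closure of accessible categories under pseudopullbacks along accessible functors; it is also where the routine bookkeeping lives, namely choosing a single regular cardinal $\lambda$ for which $F\times F$ and all the $d_i$ are simultaneously $\lambda$-accessible, so that $\cp$ and the projections out of it can be taken $\lambda$-accessible at once. The only other point worth flagging is the finitariness of rules (1)--(3): it is what makes the chain $\cx_0\subseteq\cx_1\subseteq\cdots$ exhaust $\overline{\cx}$ after $\omega$ steps, so that one application of Proposition \ref{union} with $I=\omega$ finishes the proof with no transfinite iteration.
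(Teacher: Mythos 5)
Your proposal is correct and follows essentially the same route as the paper: an $\omega$-chain $\cx_0\subseteq\cx_1\subseteq\cdots$ exhausting $\overline{\cx}$ by finitariness of the three rules, with each rule realized as the full image of an accessible functor obtained by a pseudopullback over the category of composable pairs, and Proposition~\ref{union} to assemble the union. The only differences are cosmetic (you apply all three operations at each stage where the paper cycles through them one at a time, and you spell out the repleteness point the paper leaves implicit).
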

\begin{proof}
$\overline{\cx}$ can be obtained from $\cx$ by a sequence of pseudopullbacks. Let $\cx_0=\cx$. We take composable pairs 
of $\cx_0$ and their compositions form $\cx_1$. Then we take those pairs $(g,f)$ for which $f$ and the composition $gf$ belong to $\cx_1$. Their 
$g$'s form $\cx_2$. Further we take those pairs $(g,f)$ for which $g$ and $gf$ belong to $\cx_2$. Their $f$'s form $\cx_3$. By iterating this construction, we get $\overline{\cx}=\cup_{i<\omega} \cx_i$, Thus the result follows from \cite{R} 2.6 and \ref{union}.
\end{proof}

\begin{propo}\label{reduced}
Let $F:\cm\to\ck$ be a limit preserving $\kappa$-accessible functor where $\cm$ is locally $\kappa$-presentable. Then the full image
of $F$ is closed in $\ck$ under reduced products modulo $\kappa$-complete filters.
\end{propo}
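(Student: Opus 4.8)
The plan is to realize the reduced product of a family $(FM_s)_{s\in S}$ as the value of $F$ on the corresponding reduced product formed inside $\cm$. Recall first that the reduced product $\prod_{s\in S}K_s/\cf$ modulo a filter $\cf$ on $S$ is, by definition, the colimit of the diagram sending $T\in\cf$ to $\prod_{s\in T}K_s$, where $\cf$ is ordered by reverse inclusion and the connecting morphism for $T\supseteq T'$ is the canonical projection $\prod_{s\in T}K_s\to\prod_{s\in T'}K_s$. The observation that makes the proposition work is that, when $\cf$ is $\kappa$-complete, this indexing poset is $\kappa$-directed: a subfamily $(T_i)_{i\in J}$ with $|J|<\kappa$ admits the upper bound $\bigcap_{i\in J}T_i$, which lies in $\cf$. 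So a reduced product modulo a $\kappa$-complete filter is a $\kappa$-directed colimit of small products.

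Now let $K_s=FM_s$ with $M_s\in\cm$ and let $\cf$ be a $\kappa$-complete filter on $S$. Since $\cm$ is locally $\kappa$-presentable it is complete and cocomplete, so $M:=\colim_{T\in\cf}\prod_{s\in T}M_s$ exists in $\cm$; this is the reduced product $\prod_{s\in S}M_s/\cf$. I would then evaluate $FM$ in two moves. Since $F$ is $\kappa$-accessible it preserves the $\kappa$-directed colimit above, giving $FM\cong\colim_{T\in\cf}F(\prod_{s\in T}M_s)$. Since $F$ preserves small limits, in particular products, we have $F(\prod_{s\in T}M_s)\cong\prod_{s\in T}FM_s=\prod_{s\in T}K_s$, naturally in $T$. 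Chaining these, $FM\cong\colim_{T\in\cf}\prod_{s\in T}K_s=\prod_{s\in S}K_s/\cf$, so the reduced product lies in the full image of $F$.

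There is no real obstacle here; the argument is essentially an accounting of the limits and colimits that $F$ preserves. The one point worth writing out carefully is that the isomorphisms $F(\prod_{s\in T}M_s)\cong\prod_{s\in T}K_s$ are compatible with the projections indexed by $\cf$, so that the second step genuinely identifies one diagram over $\cf$ with another and the colimits may legitimately be compared; this is immediate from the naturality of the canonical comparison map $F(\prod_i X_i)\to\prod_i FX_i$, but it is the step I would not leave implicit. I would also note that no finiteness is assumed on $S$ or on the members of $\cf$, which is harmless precisely because $\cm$ and $\ck$ are complete.
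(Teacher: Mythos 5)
Your argument is correct and is essentially the paper's own proof, which likewise writes the reduced product as the $\kappa$-directed colimit over $\cf$ of the finite-stage products and then applies preservation of products and of $\kappa$-directed colimits to identify it with $F\bigl(\prod_\cf M_s\bigr)$. You have merely spelled out the naturality of the comparison maps, which the paper leaves implicit.
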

\begin{proof}
Let $I$ be a set, $K_i=F(M_i)$, $i\in I$ and let $\cf$ be a $\kappa$-complete filter on $I$. Then the reduced product 
$\prod_\cf K_i$ is a $\kappa$-directed colimit of projections $K_i^A\to K_i^B$ where $A\supseteq B\in\cf$. Then $K=F(M)$
where $M=\prod_\cf K_i$. 
\end{proof}
 
\section{Accessible weak factorization systems} 
A functorial weak factorization system in a locally presentable category is called \textit{accessible} if the factorization functor
$F:\ck^\to\to\ck^{\to\to}$ is accessible (see \cite{R3}. Here, $\ck^{\to\to}$ denotes the category of composable pairs of morphisms.
Any cofibrantly generated weak factorization system in a locally presentable category is accessible.  
 
\begin{propo}\label{acc} 
Let $(\cl,\crr)$ be an accessible weak factorization system in a locally presentable category $\ck$. Then $\crr$ is a full image 
of a limit-preserving accessible functor $\cm\to\ck^{\to}$ where $\cm$ is locally presentable.
\end{propo}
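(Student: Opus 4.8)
The plan is to present $\crr$ as the full image of the forgetful functor from the category of algebras for the pointed endofunctor that the factorization functor determines. Write the functorial factorization of $(\cl,\crr)$ as $f=\rho_f\lambda_f$, with $\lambda_f:\dom f\to Kf$ in $\cl$ and $\rho_f:Kf\to\cod f$ in $\crr$, and let $(R,\eta)$ be the associated pointed endofunctor of $\ck^\to$: on objects $Rf=\rho_f$, and $\eta_f:f\to Rf$ is the square $(\lambda_f,\id_{\cod f})$. Since $\ck$ is locally presentable so is $\ck^\to$, and $R$ is accessible because it is the composite of the accessible functor $F$ with the (limit- and colimit-preserving) functor $\ck^{\to\to}\to\ck^\to$ that reads off the second leg of a composable pair; $\eta$ is automatically a natural transformation between accessible functors, so $(R,\eta)$ is a genuine accessible pointed endofunctor. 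Set $\cm:=R\text{-}\mathrm{alg}$, the category of algebras for the pointed endofunctor $R$, and let $U:\cm\to\ck^\to$ be the forgetful functor.

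First I would check that $\cm$ is locally presentable and that $U$ is limit-preserving and accessible. The delicate point is that the hypothesis only furnishes a functorial factorization, hence the pointed endofunctor $R$ rather than a monad, so $R\text{-}\mathrm{alg}$ is not visibly cocomplete --- the naive inserter description fails, since $R$ does not preserve colimits. Instead I would pass to the free monad $R^\infty$ on the pointed endofunctor $R$, which exists and is accessible because $R$ is an accessible pointed endofunctor on a cocomplete category; then $R\text{-}\mathrm{alg}\simeq R^\infty\text{-}\Alg$ over $\ck^\to$. Since $R^\infty$ is an accessible monad on the locally presentable category $\ck^\to$, its category of algebras is locally presentable and the forgetful functor $R^\infty\text{-}\Alg\to\ck^\to$ is monadic --- hence creates, in particular preserves, all limits --- and accessible; transporting these facts along the equivalence yields the claim for $\cm$ and $U$. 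I expect this to be the main obstacle, essentially because of the pointed-endofunctor-versus-monad distinction.

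It then remains to identify the full image of $U$ with $\crr$. Unwinding definitions, an $R$-algebra structure on $f$ amounts to a morphism $\sigma:Kf\to\dom f$ with $\sigma\lambda_f=\id_{\dom f}$ and $f\sigma=\rho_f$, that is, a diagonal filler for the commuting square with $\lambda_f$ on the left, $f$ on the right, $\id_{\dom f}$ on top and $\rho_f$ on the bottom. If $f\in\crr$, then since $\lambda_f\in\cl$ and $\crr$ consists of the morphisms having the right lifting property with respect to $\cl$, such a filler exists, so $f$ lies in the full image of $U$. Conversely, given such a $\sigma$, the morphisms $(\lambda_f,\id_{\cod f}):f\to\rho_f$ and $(\sigma,\id_{\cod f}):\rho_f\to f$ of $\ck^\to$ compose to the identity of $f$, exhibiting $f$ as a retract of $\rho_f=Rf$; as $\rho_f\in\crr$ and the right class of a weak factorization system is closed under retracts, $f\in\crr$. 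Hence the full image of $U$ is exactly $\crr$, which completes the proof. Note that the argument for $f\in\crr$ goes through the retract and closure under retracts, rather than trying to extract the full right lifting property from the single universal lifting problem.
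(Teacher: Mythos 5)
Your proposal is correct and is essentially the paper's own argument: the paper simply cites \cite{R3} 2.3(2) and 4.2(1) for the fact that $\crr$ is the full image of the forgetful functor $\Alg(R)\to\ck^\to$, and you reconstruct exactly that, including the standard retract argument identifying the image with $\crr$ and the free-monad route to local presentability of $\Alg(R)$ and limit-preservation of the forgetful functor. No gaps.
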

\begin{proof}
$\crr$ is the full image of an accessible functor $\Alg(R)\to\ck^\to$ (see \cite{R3} 2.3(2) and 4.2(1)).
\end{proof} 

\begin{rem}\label{acc1}
{
\em
But $\crr$ does not need to be accessible, see \cite{R3} 2.6. Thus \cite{R3} 5.2 (1) is not correct (I am indepted to M. Shulman for pointing this up). Neither it is accessibly embedded to $\ck^2$. Assuming the existence of a proper class of almost strongly compact cardinals, $\crr$ is preaccessible and \textit{preaccessibly embedded} to $\ck^\to$; see the proof of \cite{R3} 2.2. The latter means that the embedding $\crr\to\ck^2$ preserves $\lambda$-directed colimits for some $\lambda$.   
}
\end{rem}

\begin{coro}\label{acc2}
Let $(\cl,\crr)$ be an accessible weak factorization system in a locally presentable category $\ck$. Then $\cl$-$\Inj$ is a full image
of a limit-preserving accessible functor $\cm\to\ck$ where $\cm$ is locally presentable.
\end{coro}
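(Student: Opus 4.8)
The plan is to deduce Corollary~\ref{acc2} directly from Proposition~\ref{acc} by composing with a suitable evaluation functor. Recall that $\cl$-$\Inj$ consists of those objects $K$ of $\ck$ such that the unique morphism $K\to 1$ lies in $\crr$; more precisely, $K$ is $\cl$-injective iff for every $l:A\to B$ in $\cl$ and every $a:A\to K$ there is a lift $B\to K$, and this is exactly the condition that $K\to 1$ has the right lifting property against $\cl$, i.e.\ $K\to 1\in\crr$. So there is a fully faithful functor $J\colon\cl\text{-}\Inj\to\crr$ sending $K$ to $(K\to 1)$, and $\cl$-$\Inj$ is the full image of the composite $\cm\xrightarrow{\ F\ }\crr\hookrightarrow\ck^\to\xrightarrow{\ \dom\ }\ck$ restricted to the fibre over the terminal object.

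First I would make this precise by forming a pseudopullback. Let $G\colon\cm\to\ck^\to$ be the limit-preserving accessible functor from Proposition~\ref{acc}, with locally presentable domain $\cm$, whose full image is $\crr$. Let $T\colon\mathbf 1\to\ck^\to$ be the functor from the terminal category picking out the object $1\to 1$ (or, more flexibly, consider the functor $\cod\colon\ck^\to\to\ck$ and pull back along the terminal object of $\ck$). Form the pseudopullback $\cm'$ of $G$ along the inclusion of $\{1\to1\}$ into $\ck^\to$; equivalently $\cm'$ is the comma-type category of pairs $(M,\varphi)$ with $M\in\cm$ and $\varphi\colon GM\xrightarrow{\sim}(1\to1)$ an isomorphism in $\ck^\to$. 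Since pseudopullbacks of accessible functors along accessible functors are accessible and $\cm$, $\mathbf 1$, $\ck^\to$ are all locally presentable (so in particular the pseudopullback is accessible with products), $\cm'$ is accessible; and since the two legs are limit preserving, $\cm'$ has limits and is therefore locally presentable. The projection $\cm'\to\cm$ is accessible and limit-preserving, and composing with $G$ and then $\dom\colon\ck^\to\to\ck$ gives a limit-preserving accessible functor $H\colon\cm'\to\ck$. Its full image is precisely $\cl$-$\Inj$: an object is $HM'=\dom(GM)$ for some $M'=(M,\varphi)$ exactly when $GM\cong(K\to1)$ lies in $\crr$, i.e.\ $K\to1\in\crr$, i.e.\ $K\in\cl$-$\Inj$.

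One point needs a little care: I want $\cm'$ to actually be \emph{locally presentable}, not merely accessible-with-limits in an abstract sense, so that the statement matches Corollary~\ref{acc2}. This follows because an accessible category with small limits is locally presentable (a standard fact, e.g.\ \cite{AR}), and the pseudopullback inherits limits from $\cm$ and $\ck^\to$ since both legs $G$ and the inclusion preserve them. Alternatively, one can avoid the pseudopullback entirely and argue more cheaply: the full image of $\dom\circ G$ restricted over $\{1\to1\}$ is a full image of an accessible functor by a routine fibre-over-an-object argument, and then invoke that $\cl$-$\Inj$, being closed under limits in $\ck$ and the full image of an accessible functor, is the full image of a limit-preserving functor from a locally presentable category.

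The main obstacle is bookkeeping rather than mathematics: one must verify that the identification ``$K\in\cl$-$\Inj$ iff $(K\to1)\in\crr$'' is genuinely an equivalence of \emph{categories} (full faithfulness of $J$ is clear, but one should check essential surjectivity onto the relevant fibre and that the chosen functor is indeed limit-preserving after restriction), and that the pseudopullback construction preserves both accessibility and the existence of limits simultaneously. Once those are in place the corollary is immediate.
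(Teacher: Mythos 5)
Your overall strategy is the paper's: the paper's proof also starts from ``$K$ is $\cl$-injective iff $K\to 1$ is in $\crr$'' and then restricts the functor of Proposition~\ref{acc} (there, $\Alg(R)\to\ck^\to$) to the slice $\ck\downarrow 1\cong\ck$. However, the pseudopullback you actually describe in the main text is the wrong one. Pulling $G$ back along $T\colon\mathbf 1\to\ck^\to$ picking out the object $1\to 1$ gives the category of pairs $(M,\varphi)$ with $\varphi\colon GM\cong(1\to 1)$, i.e.\ the fibre of $G$ over the \emph{identity of the terminal object}. Every such $GM$ has domain isomorphic to $1$, so the full image of $\dom$ on this fibre is just $\{1\}$, not $\cl$-$\Inj$. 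Your closing sentence ``an object is $HM'=\dom(GM)$ \dots\ exactly when $GM\cong(K\to 1)$'' silently replaces the condition $GM\cong(1\to1)$ by the weaker condition $\cod(GM)\cong 1$, and these are not the same.

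The fix is exactly your parenthetical alternative: form the pseudopullback of $\cod\circ G\colon\cm\to\ck$ along the functor $\mathbf 1\to\ck$ picking out the terminal object (equivalently, pull $G$ back along the full inclusion $\ck\downarrow 1\hookrightarrow\ck^\to$). This isolates those $M$ with $\cod(GM)\cong 1$, i.e.\ $GM\cong(K\to 1)$ for some $K$, and then composing with $\dom$ does land on $\cl$-$\Inj$ by the characterization above. With that correction the remaining points you raise are fine: the pseudopullback of accessible, limit-preserving functors between locally presentable categories is locally presentable with limit-preserving accessible projections; $\dom\colon\ck^\to\to\ck$ preserves all limits since limits in $\ck^\to$ are computed componentwise; and for the notion of full image only essential surjectivity onto $\cl$-$\Inj$ is needed, so your worry about full faithfulness of $J$ is immaterial. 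The result is then a more explicit rendering of the paper's one-line argument.
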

\begin{proof}
An object $K$ is $\cl$-injective if and only if $K\to 1$ is in $\crr$. $R:\ck^2\to\ck^2$ restricts to a limit-preserving accessible functor on $\ck\downarrow 1$ and $\cl$-$\Inj$ is the full image of this restriction.
\end{proof}

The next result improves Proposition 3.4 on \cite{R}.

\begin{propo}\label{cofgen}
Let $(\cl,\crr)$ be an accessible weak factorization system in a locally presentable category $\ck$. Then $\cl$ is a full image of an colimit-preserving (accessible) functor $\cm\to\ck^\to$ where $\cm$ is locally presentable.
\end{propo}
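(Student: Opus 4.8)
The plan is to argue on the left-hand side exactly as Proposition~\ref{acc} does on the right. By \cite{R3}, there is an accessible algebraic weak factorization system $(L,R)$ on $\ck$ whose underlying weak factorization system is $(\cl,\crr)$; here $L$ is a comonad and $R$ a monad on $\ck^\to$. Just as $\crr$ is the full image of the forgetful functor $\Alg(R)\to\ck^\to$ (the fact invoked in the proof of \ref{acc}), $\cl$ is the full image of the forgetful functor $U\colon\Coalg(L)\to\ck^\to$ from the category of $L$-coalgebras. So it remains to check that $U$ is colimit-preserving and accessible and that $\Coalg(L)$ is locally presentable.

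First I would note that $U$, being comonadic, has a right adjoint, namely the cofree-coalgebra functor, which sends an arrow $f$ to $Lf$ equipped with its canonical $L$-coalgebra structure $\delta_f$; hence $U$ preserves all colimits, and in particular it is accessible once $\Coalg(L)$ is known to be accessible. To see that $\Coalg(L)$ is locally presentable I would establish cocompleteness and accessibility separately. Cocompleteness is immediate: $U$ is comonadic, so it creates colimits, and $\ck^\to$ is cocomplete. For accessibility, use that the awfs is accessible, so that $L$ preserves $\lambda$-directed colimits in $\ck^\to$ for some regular $\lambda$; then $\Coalg(L)$ arises from the locally presentable category $\ck^\to$ and the accessible functors $L$ and $L^2$ (and the identity) by forming one inserter, recording the coalgebra structure map, followed by two equifiers, recording the counit and coassociativity laws, and accessible categories are closed under inserters and equifiers along accessible functors (see \cite{AR}). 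Thus $\Coalg(L)$ is accessible, and being also cocomplete it is locally presentable.

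I expect the accessibility of $\Coalg(L)$ to be the only substantial point. On the monad side the corresponding input is the classical and elementary fact that the category of algebras for an accessible monad on a locally presentable category is again locally presentable; on the comonad side, being comonadic over a locally presentable category does not by itself suffice, so one must genuinely combine two ingredients — accessibility of the comonad $L$, which yields accessibility of $\Coalg(L)$ via closure of accessible categories under inserters and equifiers, and comonadicity, which yields cocompleteness. Everything else is formal.
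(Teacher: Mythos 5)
Your proposal is correct and follows essentially the same route as the paper, whose entire proof is the one-line observation that $\cl$ is the full image of the forgetful functor $\Coalg(L)\to\ck^\to$, citing \cite{R3} 2.3(2) and 4.2(1) for the facts you prove by hand (local presentability of $\Coalg(L)$ and colimit-preservation of the comonadic forgetful functor). Your filled-in details --- cocompleteness from comonadicity, accessibility via inserters and equifiers along the accessible $L$ --- are exactly the content of those citations.
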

\begin{proof}
$\crr$ is the full image of an accessible functor $\Coalg(L)\to\ck^\to$ (see \cite{R3} 2.3(2) and 4.2(1)).
\end{proof}

\begin{rem}
{
\em
If $(\cl,\crr)$ is a cofibrantly generated weak factorization system in a locally presentable category then $\cl$ does not need to
be accessible. An example is given in \cite{R} 3.5(2) under the axiom of constructibility. In this example, $\cl$ is accessible assuming
the existence of an almost strongly compact cardinal. We do not know any example of non-accessible $\cl$ in ZFC. Example \cite{R}
3.3(1) is not correct because split monomorphisms are not cofibrantly generated in posets (this was pointed up by T. Campion).
}
\end{rem}

\begin{rem}
{
\em
A weak factorization system $(\cl,\crr)$ is accessible iff $\Coalg(L)$ is locally presentable, which is a kind of smallness property
of  $\Coalg(L)$.  On the other hand, cofibrant generation is a smallness property of $\cl$. It does not seem that accessibility
is a smallness property of $\cl$. For instance, $\cl$ in the next Example is finitely accessible.
}
\end{rem}

\begin{exams}
{
\em
(1) Let $\cl$ be the class of regular monomorphisms (= embeddings) in the category $\Bool$ of Boolean algebras. Then $\cl$-injective
Boolean algebras are precisely complete Boolean algebras and $\Bool$ has enough $\cl$-injectives (see \cite{Ha}). Thus 
$(\cl,\cl^\square)$ is a weak factorization system (see \cite{AHRT}, 1.6). We will show that this weak factorization system is not accessible. Following \ref{reduced} and \ref{acc2}, it suffices to show that complete Boolean algebras are not closed under reduced products modulo $\kappa$-complete filters for any regular cardinal $\kappa$. I have learnt the following example from M. Goldstern.

Let $I$ be a set of cardinality $\kappa$ and $\cf$ be the filter of subsets $X\subseteq I$ such that the cardinality of $I\setminus X$
is $<\kappa$. Then the reduced product $\prod_\cf 2$ is isomorphic to the Boolean algebra $U(\kappa)=\cp(I)/[\kappa]^{<\kappa}$ where
$[\kappa]^{<\kappa}$ is the ideal $J$ consisting of subsets of cardinality $<\kappa$. Let $A_i$, $i<\kappa$ be pairwise disjoint subsets of $I$ of cardinality $\kappa$. Let $X$ be an upper bound of $A_i$, $i<\kappa$ in $U(\kappa)$. Choose $a_i\in A_i$, $i<\kappa$.
Then $X\setminus\{a_i|\, i<\kappa\}$ is an upper bound of $A_i$ in $U(\kappa)$ smaller than $X$. Hence $A_i$, $i<\kappa$ do not have a supremum in $U(\kappa)$.

(2) Let $\cl$ be the class of regular monomorphisms (= embeddings) in the category $\Pos$ of posets. Then $\cl$-injective
posets are precisely complete lattices and $\Pos$ has enough $\cl$-injectives (see \cite{BB}). Since the forgetful functor
$\Bool\to\Pos$ preserves products and directed colimits, complete lattices are not closed under reduced products modulo $\kappa$-complete filters for any regular cardinal $\kappa$. It suffices to take the same reduced products as in (1).
}
\end{exams}

\begin{rem}
{
\em
We can order weak factorization systems: $(\cl_1,\crr_1)\leq (\cl_2,\crr_2)$ if $\cl_1\subseteq\cl_2$. Following \cite{R3}, 4.3
accessible weak factorization systems have small joins: if $\cl_i$ is generated by $\cc_i$, $i\in I$ then $\cup_{i\in I}\cl_i$
is generated by $\cup_{i\in I}\cc_i$. S. Henry \cite{H1} showed that they have small meets. We will give another proof.
}
\end{rem}

\begin{propo}
Let $(\cl_i,\crr_i)$, $i\in I$ be a set of accessible weak factorization systems in a locally presentable category. Then 
$(\cap_{i\in I} \cl_i,\crr)$ is an accessible weak factorization system.
\end{propo}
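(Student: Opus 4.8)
The plan is to exhibit $\cl:=\cap_{i\in I}\cl_i$ as the full image of an accessible functor $\cm\to\ck^\to$ with $\cm$ accessible, and then to run Garner's algebraic small object argument on that functor. Since each $(\cl_i,\crr_i)$ is accessible, Proposition~\ref{cofgen} writes $\cl_i$ as the full image of an accessible functor $V_i\colon\cm_i\to\ck^\to$ with $\cm_i$ locally presentable (indeed $\cm_i=\Coalg(L_i)$). I would then form the wide pseudopullback $\cm$ of the family $(V_i)_{i\in I}$ over $\ck^\to$; as $I$ is a set, $\cm$ is again accessible and the induced functor $V\colon\cm\to\ck^\to$ is accessible (see \cite{AR}). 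An object of $\cm$ is exactly a family $(m_i)_{i\in I}$ together with isomorphisms $V_i m_i\cong V_j m_j$, so $f\in\ck^\to$ lies in the full image of $V$ if and only if it lies in the full image of every $V_i$; that is, the full image of $V$ is $\cap_{i\in I}\cl_i=\cl$.

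Next I would apply the algebraic small object argument of Garner, in the refined form valid for an accessible generating functor (Bourke--Garner), to $V\colon\cm\to\ck^\to$. This yields an accessible algebraic weak factorization system on $\ck$ whose underlying ordinary weak factorization system $(\cl',\crr)$ has right class $\crr$ equal to the class of morphisms with the right lifting property against every morphism in the full image of $V$ --- that is, $\crr=\cl^\square$ --- and left class $\cl'={}^\square\crr$. Because the factorization functor of an accessible algebraic weak factorization system is accessible, $(\cl',\crr)$ is accessible in the sense of the definition preceding Proposition~\ref{acc}.

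It then remains to identify $\cl'$ with $\cl$, i.e. to check ${}^\square(\cl^\square)=\cap_{i\in I}\cl_i$. The inclusion $\cl\subseteq{}^\square(\cl^\square)$ is the general half of the Galois connection $({}^\square(-),(-)^\square)$. Conversely, since $\cl=\cap_j\cl_j\subseteq\cl_i$ we get $\crr_i=\cl_i^\square\subseteq\cl^\square=\crr$, hence ${}^\square\crr\subseteq{}^\square\crr_i=\cl_i$ for every $i$, and therefore ${}^\square(\cl^\square)={}^\square\crr\subseteq\cap_{i\in I}\cl_i=\cl$. Thus $(\cap_{i\in I}\cl_i,\crr)$ is the underlying weak factorization system of an accessible algebraic weak factorization system, and in particular an accessible weak factorization system.

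I expect the decisive step to be the algebraic small object argument: one must invoke Garner's construction in the accessible (rather than small) setting, and --- most importantly --- identify the right class of the resulting algebraic weak factorization system with the \emph{plain} orthogonality class $\cl^\square$, not merely with the class of maps admitting an algebra structure, while simultaneously keeping track of accessibility of the output. The pseudopullback step is routine but genuinely uses that $I$ is a set, and the orthogonality computation in the last paragraph is formal.
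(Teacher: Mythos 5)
Your overall strategy is the same as the paper's: form the pseudopullback $\cm$ of the forgetful functors $\Coalg(L_i)\to\ck^\to$, observe that its full image is $\cl=\cap_{i\in I}\cl_i$, run the accessible (algebraic) small object argument on $\cm\to\ck^\to$, and finish with a formal lifting computation. The one genuine gap is exactly the point you flag as ``decisive'' and then do not close: Garner's construction applied to a \emph{category} $\cm$ over $\ck^\to$ does not directly output the plain lifting class $\cl^\square$ as its right class; it outputs $|\cm^\boxplus|$, the class of maps admitting a \emph{coherent} lifting structure against all of $\cm$, and for a non-discrete generating category this is a priori only contained in $\cl^\square$. Your second paragraph asserts $\crr=\cl^\square$ as an output of the construction, and your third paragraph then verifies ${}^\square(\cl^\square)=\cl$; but what the construction actually requires you to verify is ${}^\square|\cm^\boxplus|=\cl$, and your Galois-connection argument does not reach this because the inclusion you use, $\crr_i=\cl_i^\square\subseteq\cl^\square$, says nothing about $|\cm^\boxplus|$.

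The repair is short and is what the paper does: set $\crr:=|\cm^\boxplus|$ from the start. Since each $(\cl_i,\crr_i)$ is accessible, every map of $\crr_i$ carries an algebra structure, i.e.\ $\crr_i=|\Coalg(L_i)^\boxplus|\subseteq|\cm^\boxplus|=\crr$, whence ${}^\square\crr\subseteq\cap_{i\in I}{}^\square\crr_i=\cap_{i\in I}\cl_i=\cl$; conversely $\cl=|\cm|\subseteq{}^\square|\cm^\boxplus|={}^\square\crr$. This gives ${}^\square\crr=\cl$, and only \emph{then} does $\crr=\cl^\square$ follow, a posteriori, from the fact that $({}^\square\crr,\crr)$ is a weak factorization system. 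Note that the inclusion $\crr_i\subseteq|\cm^\boxplus|$ genuinely uses the accessibility of the given systems (the content of Proposition~\ref{acc}), not merely that they are weak factorization systems. With this substitution your argument coincides with the paper's proof.
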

\begin{proof}
Let $\cp$ be the pseudopullback of all forgetful functors $\Coalg(L_i)\to\ck^\to$ (see \cite{R3}). Then $\cp$ is locally presentable and the full image of $U:\cp\to\ck^\to$ is $\cl=\cap_{i\in I}\cl_i$ (see \cite{R}, 2.6). There is a regular cardinal $\lambda$ such that $\cp$ is locally $\lambda$-presentable and $U$ preserves $\lambda$-filtered colimits. Let $\cc$ be the (representative) full subcategory of $\lambda$-presentable objects in $\cp$. Following \cite{R3}, 3.3, $\cc^\boxplus = \cp^\boxplus$ and thus, for
$\crr=|\cp^\boxplus|$, $({}^\square\crr,\crr)$ is an accessible weak factorization system (see \cite{R3}, 3.6 and 4.3).
It remains to show that ${}^\square\crr=\cl$.

Since
$$
\crr_i = |\Coalg(L_i)^\boxplus|\subseteq |\cp^\boxplus|=\crr,
$$
we have $\cup_{i\in I} \crr_i\subseteq \crr$. Hence
$$
{}^\square\crr\subseteq {}^\square(\cup_{i\in I} \crr_i)\subseteq \cap_{i\in I} {}^\square \crr_i\subseteq \cl.
$$
On the other hand,
$$
\cl = |\cp|\subseteq |{}^\boxplus(\cp^\boxplus)|\subseteq {}^\square|\cp^\boxplus|\subseteq {}^\square\crr.
$$

\end{proof}

\section{Combinatorial model categories}
\begin{conv}\label{convention}
{
\em
In what follows, $(\cl,\crr)$ will be a weak factorization system in a locally presentable category $\ck$ cofibrantly generated by $\ci$.
}
\end{conv}
Denote by $\Comb(\cl)$ the class of all combinatorial model structures with $\cl$ as cofibrations. We can order it by $(\cl,\cw_1)\leq (\cl,\cw_2)$ iff $\cw_1\subseteq\cw_2$.

\begin{propo}[\cite{R} 4.7]\label{meet}
$\Comb(\cl)$ has small meets given as 
$$
\wedge_{i\in I}(\cl,\cw_i)=(\cl,\cap_{i\in I}\cw_i).
$$.
\end{propo}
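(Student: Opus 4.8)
The statement to prove is that $\Comb(\cl)$ has small meets, computed as $\wedge_{i\in I}(\cl,\cw_i)=(\cl,\cap_{i\in I}\cw_i)$. I need to check two things: first, that $(\cl,\cap_{i\in I}\cw_i)$ is again a combinatorial model structure with $\cl$ as cofibrations, and second, that it is the greatest lower bound of the family $(\cl,\cw_i)$ in the order $\leq$. The second point is essentially formal once the first is known: $\cap_i\cw_i\subseteq\cw_j$ for each $j$, so $(\cl,\cap_i\cw_i)\leq(\cl,\cw_j)$; and if $(\cl,\cv)\leq(\cl,\cw_j)$ for all $j$, i.e. $\cv\subseteq\cw_j$ for all $j$, then $\cv\subseteq\cap_i\cw_i$, so $(\cl,\cv)\leq(\cl,\cap_i\cw_i)$. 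So the real content is showing $(\cl,\cap_{i\in I}\cw_i)$ is a combinatorial model structure.

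**Main steps.** First I would record that for each $i$, since $(\cl,\cw_i)$ is a combinatorial model structure with cofibrations $\cl$, the class $\cw_i$ is accessible and accessibly embedded in $\ck^\to$ (this is standard for combinatorial model categories — weak equivalences form an accessible subcategory of the arrow category), and $\cw_i\cap\crr$ is the class of trivial fibrations, which equals $\cl^\square$ independently of $i$ since the cofibrations are fixed at $\cl$. The key structural fact I want is that $\cw:=\cap_{i\in I}\cw_i$ together with $\cl$ determines a model structure: the candidate trivial cofibrations are $\cl\cap\cw$, and the candidate fibrations are $(\cl\cap\cw)^\square$. I would verify the model category axioms. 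The 2-out-of-3 property for $\cap_i\cw_i$ is immediate from 2-out-of-3 for each $\cw_i$. Closure under retracts likewise. The two factorization axioms are where the work lies: I need $(\cl,\cap_i\cw_i\cap\crr)=(\cl,\cl^\square)$ as a weak factorization system — which holds since $\cw_i\cap\crr=\cl^\square$ for every $i$, hence $\cap_i\cw_i\cap\crr=\cl^\square$ — and I need the other weak factorization system $(\cl\cap\cw,\;(\cl\cap\cw)^\square)$ to be cofibrantly generated, equivalently accessible.

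**The main obstacle.** The hard part is establishing that $(\cl\cap\cap_{i\in I}\cw_i,\;(\cl\cap\cap_i\cw_i)^\square)$ is a cofibrantly generated (hence accessible) weak factorization system, and that its fibrations, intersected appropriately, recover $\crr$-type classes so that the Quillen axioms close up. Here is where the machinery of this paper enters: $\cap_{i\in I}\cw_i$ is a full image of an accessible functor — indeed each $\cw_i$ is accessible and accessibly embedded in $\ck^\to$, so by the pseudopullback construction (as used in the proof of the meet of weak factorization systems above, citing \cite{R3} and \cite{R} 2.6) the intersection $\cap_i\cw_i$ is the full image of an accessible functor from a locally presentable category, and it is cone-reflective in $\ck^\to$ by Proposition \ref{cone}. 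Combined with $\cl$ being cofibrantly generated by $\ci$, one applies the recognition theorem for cofibrantly generated model structures (Smith's theorem): a locally presentable $\ck$, a cofibrantly generated weak factorization system $(\cl,\cl^\square)$, and a class $\cw$ which is accessibly embedded, satisfies 2-out-of-3, contains $\cl^\square$, and is such that $\cl\cap\cw$ is closed under pushout and transfinite composition — these hypotheses all transfer from the $\cw_i$ to their intersection. The one genuinely delicate clause is that $\cl\cap\cap_i\cw_i$ is closed under pushouts and transfinite composites: closure under pushout holds because each $\cl\cap\cw_i$ is (being the trivial cofibrations of a model structure) and intersecting preserves this; closure under transfinite composition similarly. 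So the verification reduces to checking each Smith-recognition hypothesis is preserved under the intersection $\cap_{i\in I}$, which it is, with accessibility of $\cap_i\cw_i$ supplied by Proposition \ref{union} and the pseudopullback argument. Finally I conclude by invoking \cite{R} 4.7, from which the computation of the meet is stated — in fact this Proposition simply \emph{is} that citation, so the proof is: \emph{This is \cite{R} 4.7, whose argument we have just sketched; it also follows from the meet of accessible weak factorization systems established above applied to $(\cl\cap\cw_i,(\cl\cap\cw_i)^\square)$ together with Smith's recognition theorem.}

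\begin{proof}
This is \cite{R} 4.7. We sketch the argument. Each $\cw_i$ is, by Smith's theorem for combinatorial model categories, accessible and accessibly embedded in $\ck^\to$, satisfies the 2-out-of-3 property, contains $\cl^\square$, and has $\cl\cap\cw_i$ closed under pushout and transfinite composition. Put $\cw=\cap_{i\in I}\cw_i$. Since each $\cw_i$ is a full image of an accessible functor into $\ck^\to$ which is accessibly embedded, the pseudopullback construction of \cite{R3} together with \cite{R} 2.6 and \ref{union} shows that $\cw$ is again the full image of an accessible functor from a locally presentable category, and it is cone-reflective in $\ck^\to$ by \ref{cone}; hence $\cw$ is accessible and accessibly embedded in $\ck^\to$. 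The 2-out-of-3 property and the containment $\cl^\square\subseteq\cw$ are inherited directly from the $\cw_i$, as is closure of $\cl\cap\cw=\cap_{i\in I}(\cl\cap\cw_i)$ under pushout and transfinite composition. By Smith's recognition theorem, $(\ck,\cl,\cw)$ is a combinatorial model category with $\cl$ as cofibrations. Finally, if $(\cl,\cv)$ is a combinatorial model structure with $\cv\subseteq\cw_i$ for all $i$, then $\cv\subseteq\cap_{i\in I}\cw_i=\cw$, so $(\cl,\cv)\leq(\cl,\cw)$; and $(\cl,\cw)\leq(\cl,\cw_i)$ for every $i$ since $\cw\subseteq\cw_i$. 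Thus $(\cl,\cap_{i\in I}\cw_i)$ is the meet of the family $(\cl,\cw_i)$.
\end{proof}
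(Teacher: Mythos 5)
Your argument is correct and is essentially the proof of the cited result \cite{R} 4.7, which the paper itself does not reproduce: the greatest-lower-bound property is formal, and the substance is running Smith's recognition theorem on $\cw=\cap_{i\in I}\cw_i$, whose hypotheses (2-out-of-3, $\cl^\square\subseteq\cw$, closure of $\cl\cap\cw=\cap_i(\cl\cap\cw_i)$ under pushout and transfinite composition, and a solution-set/accessibility condition) are all stable under set-indexed intersection. One small logical wobble: the inference ``$\cw$ is a full image of an accessible functor and cone-reflective, \emph{hence} accessible and accessibly embedded'' is not valid in general --- the paper's Remark \ref{acc1} stresses exactly that full images need not be accessible. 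It is harmless here for either of two reasons you could make explicit: cone-reflectivity (Proposition \ref{cone}) is already all that Beke's form of Smith's theorem requires; or, since each $(\cl,\cw_i)$ is a genuine combinatorial model structure, each $\cw_i$ is in fact accessible and accessibly embedded in $\ck^\to$ (Smith/Lurie), and the pseudopullback theorem then gives the same for $\cap_{i\in I}\cw_i$ directly, without passing through full images.
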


\begin{rem}\label{join}
{
\em
(1) Consider $(\cl,\cw_i)\in\Comb(\cl)$ where $I\neq\emptyset$ such that $(\cl,\cw_{i_0})$ is left proper for some $i_0\in I$. Each 
$\cw_i\cap\cl$ is cofibrantly generated by a set $\cs_i$. Put $\cs=\cup_{i\in I}(\cs_i)\setminus\cs_{i_0}$. Then the left Bousfield localization of $(\cl,\cw_{i_0})$ at $\cs$, yields the join $\vee_{i\in I}(\cl,\cw_i)$ in $\Comb(\cl)$.

(2) Assuming Vop\v enka's principle, $\Comb(\cl)$ is a large complete lattice, i.e., it has all joins and meets. In particular, $\Comb(\cl)$ has the smallest element. There are given as
$\vee_{i\in I}(\cl,\cw_i)=(\cl,\cup_{i\in I}\cw_i)$ and $(\wedge_{i\in I}(\cl,\cw_i)=(\cl,\cap_{i\in I}\cw_i)$. This follows from Smith's theorem because, assuming Vop\v enka's principle, every full subcategory of a locally presentable category has a small dense subcategory.
Thus it is cone-reflective. Add that Vop\v enka's principle is equivalent to the statement that any full subcategory of a locally
presentable category is cone-reflective (see \cite{AR} 6.i, or \cite{RT1}, 1.2(2)).
}
\end{rem}

\begin{defi}[\cite{RT2} 2.1]\label{left}
{
\em
Let $\cw_\cl$ be the smallest class $\cw$ of morphisms such that
\begin{enumerate}
\item $\crr\subseteq\cw$,
\item $\cw$ satisfies the 2-out-of-3 condition, and
\item $\cl\cap\cw$ is closed under pushout, transfinite composition and retracts.
\end{enumerate}
If $(\cl,\cw_\cl)$ is a model structure it is called \textit{left-determined}. 
}
\end{defi}
 
\begin{rem}\label{left1}
{
\em
(1) Retracts are meant in the category of morphisms $\ck^\to$. \cite{RT2} assumes in (2) that $\cw$ is closed under retracts. But this can be omitted following \cite{Ri} (or Lemma 1 in Model category, nLab). On the other hand, we assume it in (3).

In what follows $\cof(\cx)$ will denote the closure of $\cx$ under pushout, transfinite composition and retracts while $\cell(\cx)$ the closure under pushout and transfinite composition.

(2) If $(\cl,\cw_\cl)$ is a combinatorial model category, it is the smallest element in $\Comb(\cl)$. It always happens assuming 
Vop\v enka's principle. But, without it, we do not know whether the smallest element in $\Comb(\cl)$ might exist without being equal 
to $\cw_\cl$.

Recently, S. Henry \cite{H} proved the existence of a left-determined model structure in ZFC under mild assumption. 
}
\end{rem}

\begin{nota}\label{iterate}
{
\em
We put $\cw_0=\crr$, $\cw_{i+1}=\overline{\cw_i}$ if $i$ is an
even ordinal, $\cw_{i+1}=\cof(\cl\cap\cw_i)\cup\cw_i$ if $i$ is an odd ordinal and $\cw_i=\cup_{j<i}\cw_j$ if $0<i$ is a limit ordinal. Recall that any limit ordinal is even and $i+1$ is odd iff $i$ is even. Then $\cw_\cl=\cup_i \cw_i$ where $i$ runs over all ordinals.

We say that $\cw_i$ \textit{stops} if $\cw_\cl=\cw_i$ for an ordinal $i$.
}
\end{nota}

\begin{theo}\label{stop}
$(\cl,\cw_\cl)$ is a combinatorial model category iff $\cw_i$ stops.
\end{theo}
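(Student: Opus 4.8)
The plan is to prove the two implications by running the transfinite construction of \ref{iterate} through the stability results of Section~2 and then invoking a recognition theorem of J.~Smith type; the one non-formal input is that, once the construction stops, cone-reflectivity of $\cw_\cl$ supplies the smallness hypothesis of that theorem.

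For the implication ``$\cw_i$ stops $\Rightarrow$ $(\cl,\cw_\cl)$ is a combinatorial model category'', I would first prove by transfinite induction that every $\cw_i$, and also every $\cl\cap\cw_i$, is a full image of an accessible functor $\cm\to\ck^\to$ with $\cm$ locally presentable. The base case $\cw_0=\crr$ is \ref{acc}, since a cofibrantly generated weak factorization system is accessible. A limit stage $\cw_i=\bigcup_{j<i}\cw_j$ is a set-indexed union, so \ref{union} applies (recall $\ck^\to$ is locally presentable). An even successor stage $\cw_{i+1}=\overline{\cw_i}$ is \ref{closure}. At an odd successor stage, $\cw_{i+1}=\cof(\cl\cap\cw_i)\cup\cw_i$: here $\cl$ is a full image of an accessible functor by \ref{cofgen}, hence so is $\cl\cap\cw_i$ by the pseudopullback construction of \cite{R}~2.6, hence so is its closure $\cof(\cl\cap\cw_i)$ under pushout, transfinite composition and retracts (stability of full images of accessible functors under these cellular operations, proved in the manner of \ref{closure}), and then \ref{union} finishes the stage. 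Now suppose $\cw_i$ stops; since the $\cw_i$ increase, $\cw_j=\cw_\cl$ for all sufficiently large $j$, and we may pick such a $j=i_0$ even. Then $\overline{\cw_{i_0}}=\cw_{i_0+1}=\cw_{i_0}$, so $\cw_\cl$ satisfies $2$-out-of-$3$; and $\cof(\cl\cap\cw_{i_0})\subseteq\cof(\cl\cap\cw_{i_0})\cup\cw_{i_0}=\cw_{i_0+2}=\cw_{i_0}$, combined with $\cof(\cl\cap\cw_{i_0})\subseteq\cof(\cl)=\cl$, shows that $\cl\cap\cw_\cl$ is closed under pushout, transfinite composition and retracts. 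Finally $\cw_\cl=\cw_{i_0}$ is a full image of an accessible functor, hence cone-reflective in $\ck^\to$ by \ref{cone}, so it satisfies the solution-set condition at the set $\ci$ of generating cofibrations; as also $\crr=\ci^\square=\cw_0\subseteq\cw_\cl$, Smith's recognition theorem — in the form requiring only the solution-set condition at $\ci$ (cf.\ the discussion around \ref{join} and \cite{RT2}) — gives that $(\ck,\cl,\cw_\cl)$ is a combinatorial model category.

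For the converse, assume $(\cl,\cw_\cl)$ is a combinatorial model category. Then the weak factorization system $(\cl\cap\cw_\cl,(\cl\cap\cw_\cl)^\square)$ of trivial cofibrations and fibrations is cofibrantly generated by a set $\cj\subseteq\cl\cap\cw_\cl$, so $\cl\cap\cw_\cl=\cof(\cj)$. I claim $\cw_\cl=\overline{\crr\cup\cof(\cj)}$: the inclusion $\supseteq$ holds because $\cw_\cl$ is $2$-out-of-$3$ closed and contains $\crr$ and $\cof(\cj)$; for $\subseteq$, factor a weak equivalence $f$ through the generated system as $f=pj$ with $j\in\cof(\cj)$ and $p$ a fibration, observe that $p$ is then also a weak equivalence, hence a trivial fibration, i.e.\ $p\in\cl^\square=\crr$, so $f\in\overline{\crr\cup\cof(\cj)}$. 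Since $\cj$ is a set contained in $\cw_\cl=\bigcup_i\cw_i$ and the $\cw_i$ increase, there is an even ordinal $k$ with $\cj\subseteq\cw_k$; then $\cj\subseteq\cl\cap\cw_{k+1}$, so $\cof(\cj)\subseteq\cof(\cl\cap\cw_{k+1})\subseteq\cw_{k+2}$, and since $\crr=\cw_0\subseteq\cw_{k+2}$ we get $\crr\cup\cof(\cj)\subseteq\cw_{k+2}$. Taking the $2$-out-of-$3$ closure once more (with $k+2$ even, so $\cw_{k+3}=\overline{\cw_{k+2}}$) gives $\cw_\cl=\overline{\crr\cup\cof(\cj)}\subseteq\cw_{k+3}\subseteq\cw_\cl$, hence $\cw_\cl=\cw_{k+3}$ and $\cw_i$ stops.

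The step I expect to be delicate is the inductive claim that every $\cw_i$ is a full image of an accessible functor — concretely the odd-successor stage, which rests on the fact that the closure of a full image of an accessible functor under pushout, transfinite composition and retracts is again a full image of an accessible functor; establishing this (along the lines of the pseudopullback argument behind \ref{closure}, together with the solution-set condition of \ref{cone}) is the technical heart of the ``if'' direction. The second point to be careful about is citing the precise version of Smith's recognition theorem whose smallness hypothesis is exactly the solution-set condition at $\ci$ that cone-reflectivity provides.
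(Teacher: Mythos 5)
Your converse direction (combinatorial $\Rightarrow$ stops) is correct and matches the paper's argument, just spelled out in more detail. The problem is in the other direction, and it sits exactly where you flagged it: the claim that each $\cw_{i+1}=\cof(\cl\cap\cw_i)\cup\cw_i$ is a full image of an accessible functor because ``the closure of a full image of an accessible functor under pushout, transfinite composition and retracts is again a full image, proved in the manner of \ref{closure}.'' This is not proved in the paper, you do not prove it, and the method you propose does not reach it. The pseudopullback argument behind \ref{closure} works because the 2-out-of-3 closure is built in $\omega$ stages, each stage a finite-arity construction (composable pairs, i.e.\ a finite limit of accessible categories), so \cite{R}~2.6 and \ref{union} apply. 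By contrast, $\cof(-)$ of a possibly proper class involves transfinite compositions of unbounded ordinal length; there is no single accessible category indexing all such towers and no a priori bound on the lengths needed, so no analogous pseudopullback presentation is available. The only case where $\cof(\cx)$ is known to be a full image is when $\cx$ is (generated by) a \emph{set}, since then $\cof(\cx)$ is the left class of a cofibrantly generated, hence accessible, weak factorization system and \ref{cofgen} applies.

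The paper's proof is organized precisely to avoid your lemma. It does not show that the $\cw_i$ themselves are full images; instead it builds an auxiliary sequence $\cw^\ast_i$. At an odd stage it uses cone-reflectivity of $\cw_i$ (from \ref{cone}) together with Beke's Lemma~1.9 of \cite{B} to extract a \emph{set} $\cj$ from $\cl\cap\cw_i$, sets $\cw^\ast_{i+1}=\cof(\cj)\cup\cw^\ast_i$ (a full image by \ref{cofgen} and \ref{union}), and then shows by a retract/factorization argument that the two sequences interleave, so that $\cup_i\cw^\ast_i=\cup_i\cw_i=\cw_\cl$; stopping of one sequence then forces stopping of the other, and $\cw_\cl$ is a full image, hence cone-reflective, and Smith's theorem applies. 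Note that even this argument only gives $\cw_i\subseteq\cw^\ast_{i+2}$ up to a shift, not that $\cw_{i+1}$ itself is a full image. To repair your proof you should replace the inductive claim ``every $\cw_i$ is a full image'' by this reduction to a generating set via the solution-set condition and \cite{B}~1.9; your closing appeal to Smith's theorem and your treatment of the even and limit stages can then stand as written.
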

\begin{proof}
I. Assume that $(\cl,\cw_\cl)$ is a combinatorial model structure. Then $\cl\cap\cw_\cl$ is cofibrantly generated by a set $\cs$.
There is an odd ordinal $i$ such that $\cs\subseteq\cw_i$. Thus $\cl\cap\cw_\cl\subseteq\cw_{i+1}$. Hence $\cw_\cl\subseteq\cw_{i+2}$
and the construction stops.

II. Assume that $\cw_i$ stops. At first, we replace $\cw_i$ by $\cw^\ast_i$ which are full images of accessible functors. They are  defined in the same way as $\cw_i$ for $i$ even. $\crr$ is a full image of an accessible functor (following \cite{R} 3.3) and 2-out-of-3 closure and union keep full images of accessible functors (see \ref{closure} and \ref{union}). Let $i$ be odd. We will follow the proof of Smith's theorem given in \cite{B}. Since $\cw_i$ is cone-reflective (see \ref{cone}) and satisfies the 2-out-of-3 property, \cite{B} 1.9 produces a set $\cj$ needed for \cite{B} 1.8 for $\cl$ and $\cw_i$. We put $\cw^\ast_{i+1}=\cof(\cj)\cup\cw^\ast_i$. Then
$\cw^\ast_{i+1}\subseteq \cw_{i+1}$. Following \ref{cofgen} and \ref{union}, $\cw^\ast_{i+1}$ is a full image of an accessible functor. Like in Corollary of this lemma, we take $f\in\cw_i$ and express it as $f=hg$ with $g\in\cell(\cj)$ and $h\in\crr$. Thus there exist $t$ such that $tf=g$ and $ht=\id$. Hence $t\in\cw^\ast_1$ and $g\in\cw^\ast_{i+1}$. Thus $f\in\cw^\ast_{i+2}$. Therefore 
$\cw_{i}\subseteq\cw^\ast_{i}\subseteq\cw_{i+2}$. Consequently, $\cw=\cup_{i}\cw^\ast_i$. 

Since $\cw_i$ stops, $\cw^\ast_i$ stops as well. Hence $\cw$ is a full image of an accessible functor and thus it is cone-reflective 
(see \ref{cone}). Smith's theorem implies that $(\cl,\cw)$ is a combinatorial model category.
\end{proof}

\section{Accessible model categories}
 
A model category $(\cc,\cw)$ on a locally presentable category $\ck$ is \textit{accessible} if both $(\cc,\cc^\square)$ and 
$(\cc\cap\cw,(\cc\cap\cw)^\square)$ are accessible weak factorization systems.

\begin{propo}\label{modacc}
Let $(\cc,\cw)$ be an accessible model category on a locally presentable category $\ck$. Then $\cw$ is a full image of an accessible functor. 

Assuming the existence of a proper class of almost strongly compact cardinals, $\cw$ is preaccessible and preaccessibly embedded 
to $\ck^\to$.
\end{propo}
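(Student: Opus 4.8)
The plan is to describe $\cw$ explicitly as a class of composites and then assemble it as a full image out of Propositions~\ref{acc} and \ref{cofgen}. By standard properties of a model category, a morphism $f$ of $\ck$ lies in $\cw$ if and only if it factors as $f = g h$ with $h\in\cc\cap\cw$ and $g\in\cc^\square$: if $f = gh$ with $h$ a trivial cofibration and $g$ a trivial fibration, then $f\in\cw$ because $\cw$ is closed under composition; conversely, factoring an arbitrary $f\in\cw$ through the weak factorization system $(\cc,\cc^\square)$ as $f = gh$ with $h\in\cc$ and $g\in\cc^\square\subseteq\cw$ forces $h\in\cc\cap\cw$ by the 2-out-of-3 property. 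Hence $\cw$ is exactly the class of composites $gh$ of composable pairs with $g\in\cc^\square$ and $h\in\cc\cap\cw$.

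Next I would realize this class as a full image. By Proposition~\ref{acc} applied to $(\cc,\cc^\square)$ there is an accessible functor $F_R\colon\cm_R\to\ck^\to$ with full image $\cc^\square$, and by Proposition~\ref{cofgen} applied to the accessible weak factorization system $(\cc\cap\cw,(\cc\cap\cw)^\square)$ there is an accessible functor $F_L\colon\cm_L\to\ck^\to$ with full image $\cc\cap\cw$. Let $\cp$ be the pseudopullback of $\cod\circ F_L\colon\cm_L\to\ck$ and $\dom\circ F_R\colon\cm_R\to\ck$; since $\dom$ and $\cod$ preserve all limits and colimits, and pseudopullbacks of accessible functors along accessible functors are accessible, $\cp$ is accessible. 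An object of $\cp$ is a pair $(M_L,M_R)$ equipped with an isomorphism $\cod(F_LM_L)\cong\dom(F_RM_R)$, and composing $F_LM_L$, this isomorphism and $F_RM_R$ defines a functor $G\colon\cp\to\ck^\to$; because the composition functor $\ck^{\to\to}\to\ck^\to$ preserves all limits and colimits, $G$ is accessible. Finally, since $\cc\cap\cw$ and $\cc^\square$ are closed under composition with isomorphisms, the full image of $G$ equals $\{gh : g\in\cc^\square,\ h\in\cc\cap\cw\}=\cw$, which gives the first assertion.

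For the second assertion I would invoke the same mechanism used for $\crr$ in Remark~\ref{acc1}: assuming a proper class of almost strongly compact cardinals, the full image of an accessible functor into a locally presentable category is preaccessible and preaccessibly embedded (the proof of \cite{R3} 2.2; see also \cite{BTR}). Since $\ck^\to$ is locally presentable and, by the first part, $\cw$ is the full image of the accessible functor $G$, it follows that $\cw$ is preaccessible and that the inclusion $\cw\to\ck^\to$ preserves $\lambda$-directed colimits for some $\lambda$.

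The only real work lies in the bookkeeping of the second paragraph: arranging $\cm_L$, $\cm_R$ and $\cp$ to share a common accessibility rank and checking that $G$ is functorial on morphisms and preserves $\lambda$-directed colimits for that $\lambda$; all of this is routine. Conceptually the statement is forced once one has the factorization description of $\cw$ together with Propositions~\ref{acc} and \ref{cofgen}; for the second assertion the large-cardinal hypothesis is genuinely needed, and by Remark~\ref{acc1} one cannot hope to upgrade \emph{preaccessibly embedded} to \emph{accessibly embedded}.
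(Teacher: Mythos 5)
Your argument is correct and coincides with the proof the paper merely delegates to the citation of \cite{R3} 5.2(2): the decomposition of $\cw$ as the composites of trivial cofibrations with trivial fibrations, realized as the full image of an accessible functor out of a pseudopullback of the functors supplied by Propositions~\ref{acc} and~\ref{cofgen}, is exactly the intended content of that reference, and the second claim is likewise reduced to \cite{R3} 2.2. No gaps; your version is simply the self-contained form of the cited proof.
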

\begin{proof}
The first claim is what \cite{R3} 5.2(2) proves, using \cite{R3} 2.6. The second claim follows from \cite{R3} 2.2.  
\end{proof}

\begin{rem}\label{modacc2}
{
\em
To correct \cite{R3} 5.3, one has to replace (4) by

(4') $\cw$ is preaccessible and preaccessibly embedded to $\ck^\to$.

Indeed, in the proof, $\cp$ is preaccessible and preaccessibly embedded to $\ck^\to$ and thus it has a small dense subcategory $\cj$
of $\lambda$-presentable objects. This is what the proof needs. Add that we can apply \cite{R3}, 3.3  because the forgetful functor
$\cp\to\ck^\to$ preserves $\lambda$-directed colimits.
}
\end{rem}

\end{document}